\newtheorem{theorem}{Theorem}
\theoremstyle{definition}
\newtheorem{defi}{Definition}
\newtheorem{example}{Example}
\newtheorem{rem}{Remark}
\newcommand{\N}{\mathbb N}
\newcommand{\Z}{\mathbb Z}
\title{The groups $\Gamma_{n}^{4}$, braids, and $3$-manifolds}
\author{V.O.Manturov\footnote{This text is a result of discussion
with L.H.Kauffman,, E.A.Mudraya, and N.D.Shaposhnik. I am deeply indebted
to them for creating some parts of this text}
,\ I.M.Nikonov}
\date{April 2023}
\begin{document}

\maketitle

\begin{abstract}
We introduce a family of groups $\Gamma_n^k$ for integer parameters $n>k.$  These groups originate from discussion of braid groups
on $2$-surfaces. On the other hand, they turn out to be related to 3-manifolds (in particular, they lead to new relationships between braids
and manifolds), triangulations (ideal triangulations) cluster algebras, dynamics of moving points, quivers, hyperbolic structures,
tropical geometry, and, probably, many other areas still to be discovered.

Among crucial reason of this importance of groups $\Gamma_{n}^{4}$ we mention the Ptolemy relation, Pentagon relation, cluster algebra, Stasheff polytope.
\end{abstract}

Keywords: Delaunay triangulation, braids, 3-manifolds, Turaev--Viro invariants, recoupling theory, spine

MSC 2020: 20F36, 57K20, 57K31, 13F60

\section{Introduction}
In \cite{KM}, the author introduced a family of groups $\Gamma_{n}^{4}$ depending on an integer parameter
$n>5$. These groups naturally describe motions of points on the plane (or a $2$-surface) by means of Vorono{\"\i} tiling and Delaunay
triangulations. As points pass through non-generic moments, generators of $\Gamma_{n}^{k}$ appear: they correspond to
flips of triangulations (Fig.~\ref{fig:flip}).

\begin{figure}[h]
\centering\includegraphics[width = 0.4\textwidth]{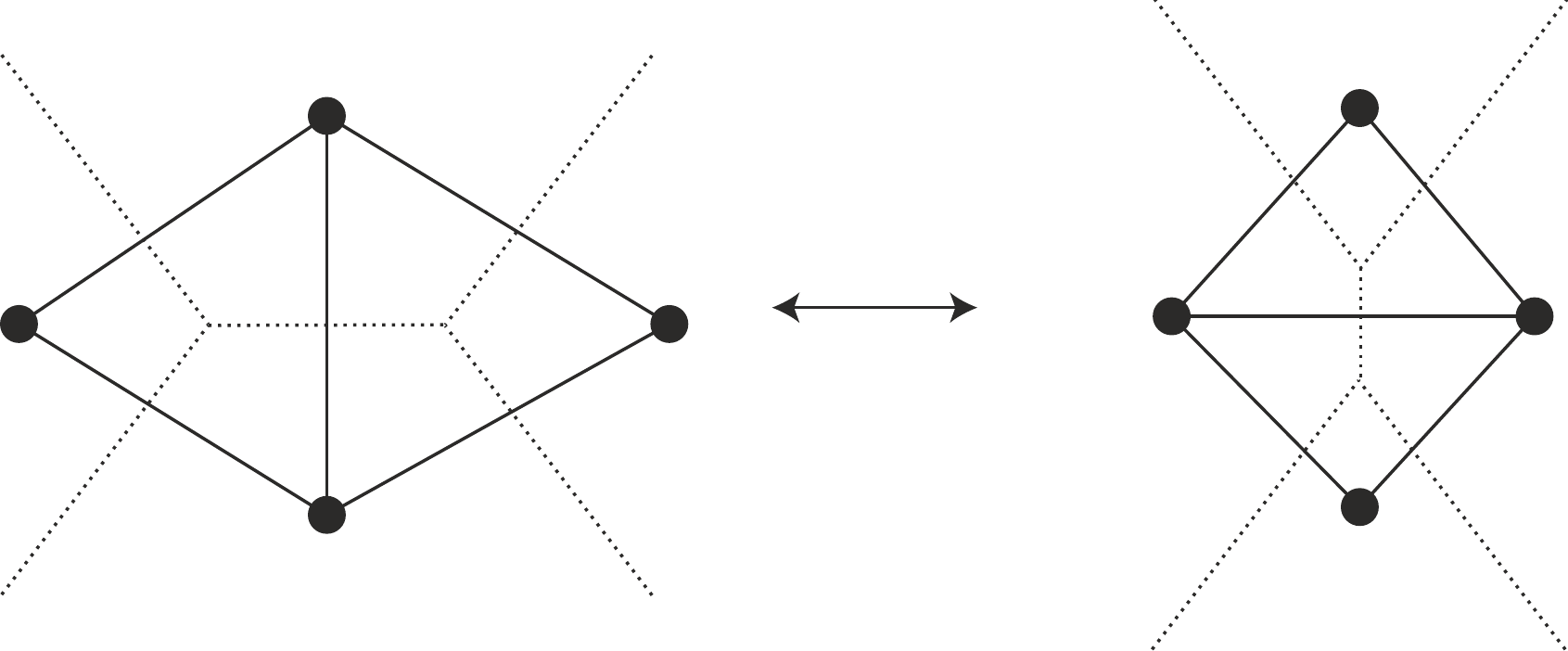}
\caption{A flip}\label{fig:flip}
\end{figure}

%{\bf Here one can draw a picture for $\Gamma$, say, from my slides: points---> Vorono{\"\i}---> Delaunay-->Points move --->flips}

The relations of the group $\Gamma$ are arranged in such a way that {\em isotopic braids give rise to equal elements
of the group $\Gamma$.} In other words, there is a homomorphism from the pure braid group on $n$ strands to the group $\Gamma_{n}^{4}$.
It seems likely that the kernel of the map $PB_{n}\to \Gamma_{n}^{4}$ consists of full twists (centre of the pure braid group), cf.~\cite{M22}.
Nevertheless, the map is very far from being surjective: there are many words in the alphabet of $\Gamma$ which do not originate
from any braids. Understanding the full depth of the groups $\Gamma_{n}^{4}$ (in particular, topological) is
a subject of special investigation.

In \cite{FKMN}, some presentations of the braid groups by means of $\Gamma$ are obtained.

The most interesting relation of the group $\Gamma_{n}^{4}$ is the {\em pentagon relation}. Graphically it looks as
shown in Fig.~\ref{fig:gamma4_relations} right. This relation says that one can apply five consecutive flips to a given triangulation of a pentagon
and return to the initial triangulation.

The pentagon relation appears in various branches of mathematics under various names. Just to mention a few of them:
{\em cluster algebras}, {\em triangulation of polytopes}, {\em Teichm\"uller spaces}, {\em Associahedra (Stasheff polytope)}, {\em recoupling theory
for $3$-manifolds} (as Biedenharn--Elliot identity).

This naturally suggests that all those parts of mathematics should be related to each other.
Some of them indeed are: say, in \cite{GKZ}, the authors found very deep connections between possible
triangulations of $2$-surfaces, cluster algebras and ...
View Figure~\ref{fig:cl_alg}.

\begin{figure}[h]
\centering\includegraphics[width = 0.9\textwidth]{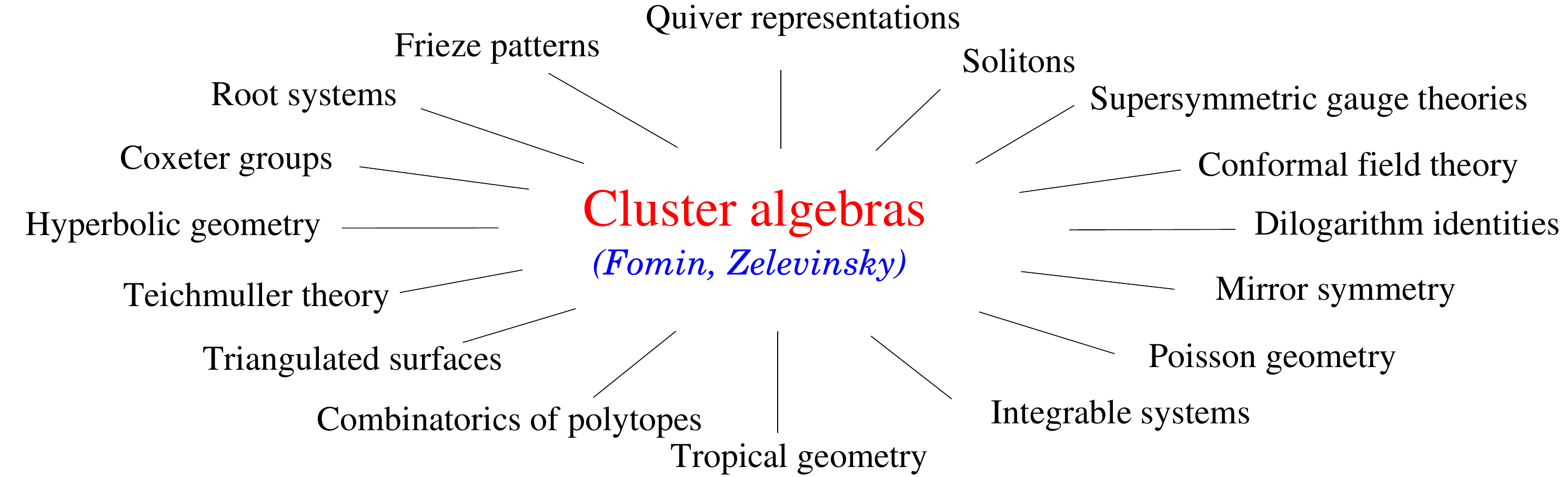}
\caption{Cluster algebras connections and applications~\cite{Felikson}}\label{fig:cl_alg}
\end{figure}

However, it seems that the authors overlooked the {\em group structure}. Once one gets this group structure,
one immediately gets invariants of braids as shown in \cite{FKMN}.
Hence, potentially all structures listed above should have something to do with braid groups (not
only classical braid groups for the plane but also braid groups for $2$-surfaces).

One exception is the {\em groupoid structure} \cite{FC} related to {\em cluster algebras} and
{\em Teichm\"uller  spaces}. This relation seems quite natural since Teichm\"uller spaces
are related to mapping class groups and braid groups. The relations between braid groups
these groupoids will be discussed elsewhere.

On the other hand, from the point of view of a low-dimensional topologist, it is worth mentioning that more
or less the same equation appears in another form (actually, the tensor form) in
the study of $3$-manifold invariants (Turaev--Viro invariants~\cite{Matveev, TV}).

The aim of the present paper is to establish a deeper connection between the subjects listed above.

The two concrete applications we touch on in more detail, are:

\begin{enumerate}

\item How to use recoupling theory (previously used for constructing $3$-manifold invariants~\cite{KL})
to get invariants of braids.

A tiny piece of this research was sketched in \cite{MN}.

\item How to use constructions of braid invariants to get Turaev--Viro like invariants of $3$-manifolds.

\end{enumerate}

Of course, we don't pretend to cover all possible relations in this single paper.
In particular, we don't touch on {\em higher groups $\Gamma_{n}^{k},k\ge 5$},
introduced by I.M.Nikonov in the book \cite{FKMN}. These groups describe triangulations of higher
dimensional manifolds; again, similar structures from other points of view, and again,
the group structure was overlooked in many deep and serious papers.

The paper is organised as follows. In Section~\ref{sect:gamma4n} we define the groups $\Gamma_n^4$ and in Section~\ref{sect:braid_to_gamma} describe the homomorphism from the braid group to $\Gamma_n^4$. In Section~\ref{sect:ptolemy_relation} we remind Ptolemy relation. Section~\ref{sect:recoupling_theory} we describe invariants of braids which come from the recoupling theory. In Section~\ref{sect:special_spines} we remind special spines of $3$-manifolds. In Section~\ref{sect:3manifold_invariants_from_gamma} we describe invariants of $3$-manifolds based on Ptolemy relation. Section~\ref{sect:ubiquitous_gamma} concludes the paper with the discussion of possible further applications of the groups $\Gamma$.

\subsection*{Acknowledgements}

We are extremely grateful to Louis H.Kauffman, Vladimir G.Turaev, Victor M.Buchstaber, Elizaveta A.Mudraya, Nikita D.Shaposhnik, Zheyan Wan for various stimulating discussions.

Some relations of the pentagon equation (which also lead to representations of various braid groups)
where pointed out to the first named author by Igor G.Korepanov.
We are extremely grateful to him for pointing it out and for the joy the first author shared with
him when working on this project. Korepanov's articles \cite{KS99, KS13, K14}
should give rise to many further deep connections of the group $\Gamma_{n}^{k}$ (not
only $\Gamma_{n}^{4}$); these results should appear elsewhere.

\section{The groups $\Gamma_{n}^{4}$}\label{sect:gamma4n}

The groups $\Gamma_{n}^{4}$ were introduced in~\cite{KM, FKMN, FMN, FKMN1} with the aim to give an algebraic description of the space of triangulations with a fixed number of
vertices.

\begin{defi}\label{dfn_Gamma}
The group $\Gamma_{n}^{4}$\index{Group!$\Gamma_n^4$}
is given by generators
 $\{ d_{(ijkl)}~|~ \{i,j,k,l\} \subset \bar{n}, |\{i,j,k,l\}| = 4\}$
and relations:

\begin{enumerate}
\item $d_{(ijkl)}^{2} = 1$ for $(i,j,k,l) \subset \bar{n}$,
\item $d_{(ijkl)}d_{(stuv)} = d_{(stuv)}d_{(ijkl)}$, for $| \{i,j,k,l\} \cap \{s,t,u,v\} | < 3$,
\item $d_{(ijkl)}d_{(ijlm)}d_{(jklm)}d_{(ijkm)}d_{(iklm)} = 1$ for distinct $i,j,k,l,m$.
\item $d_{(ijkl)}=d_{(jilk)}=d_{(klij)}=d_{(kjil)}=d_{(ilkj)}=d_{(jkli)}=d_{(lkji)}=d_{(lijk)}$ for distinct $i,j,k,l$.
\end{enumerate}
\end{defi}

\begin{figure}[h]
\centering\includegraphics[width = 0.4\textwidth]{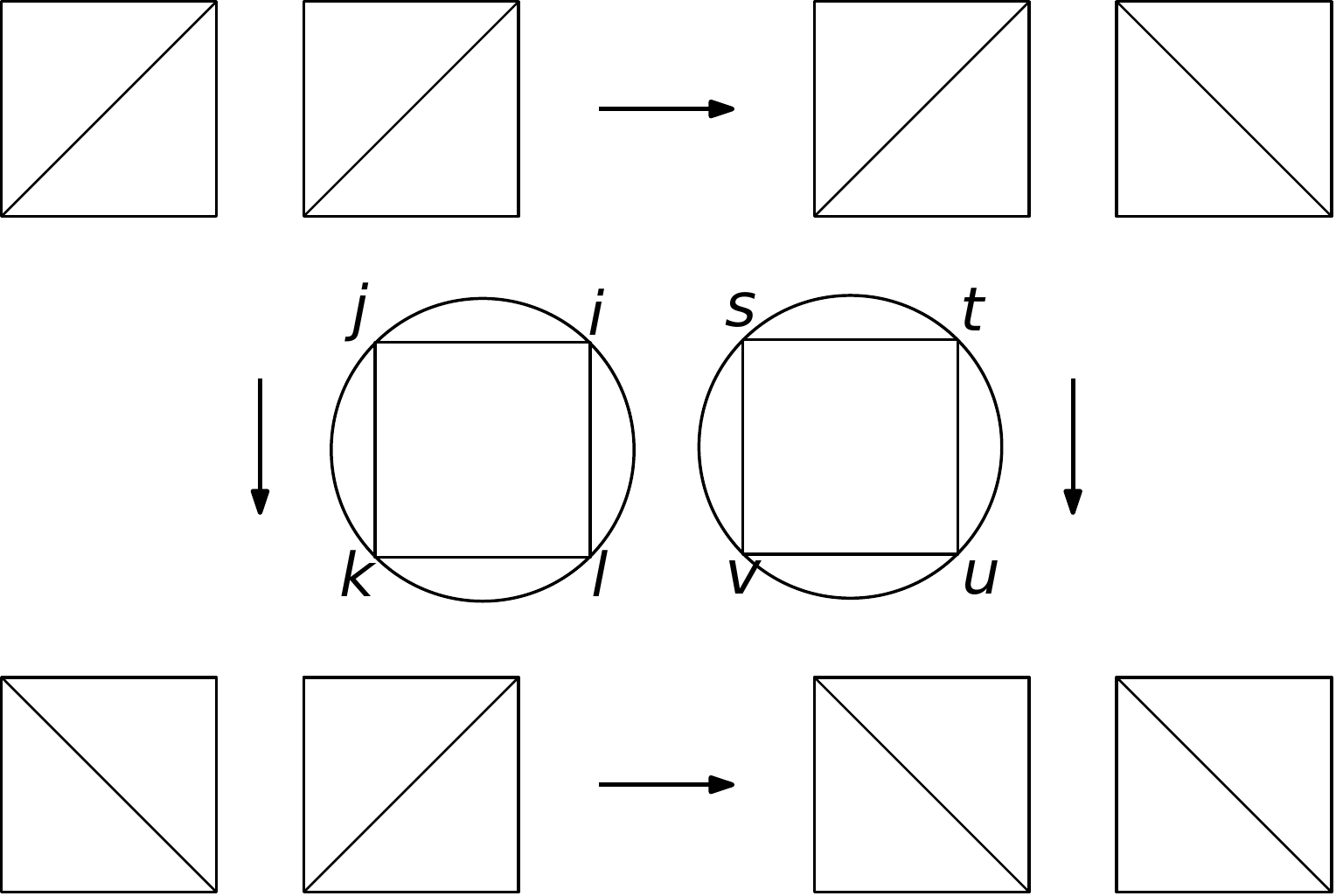}\qquad\includegraphics[width = 0.35\textwidth]{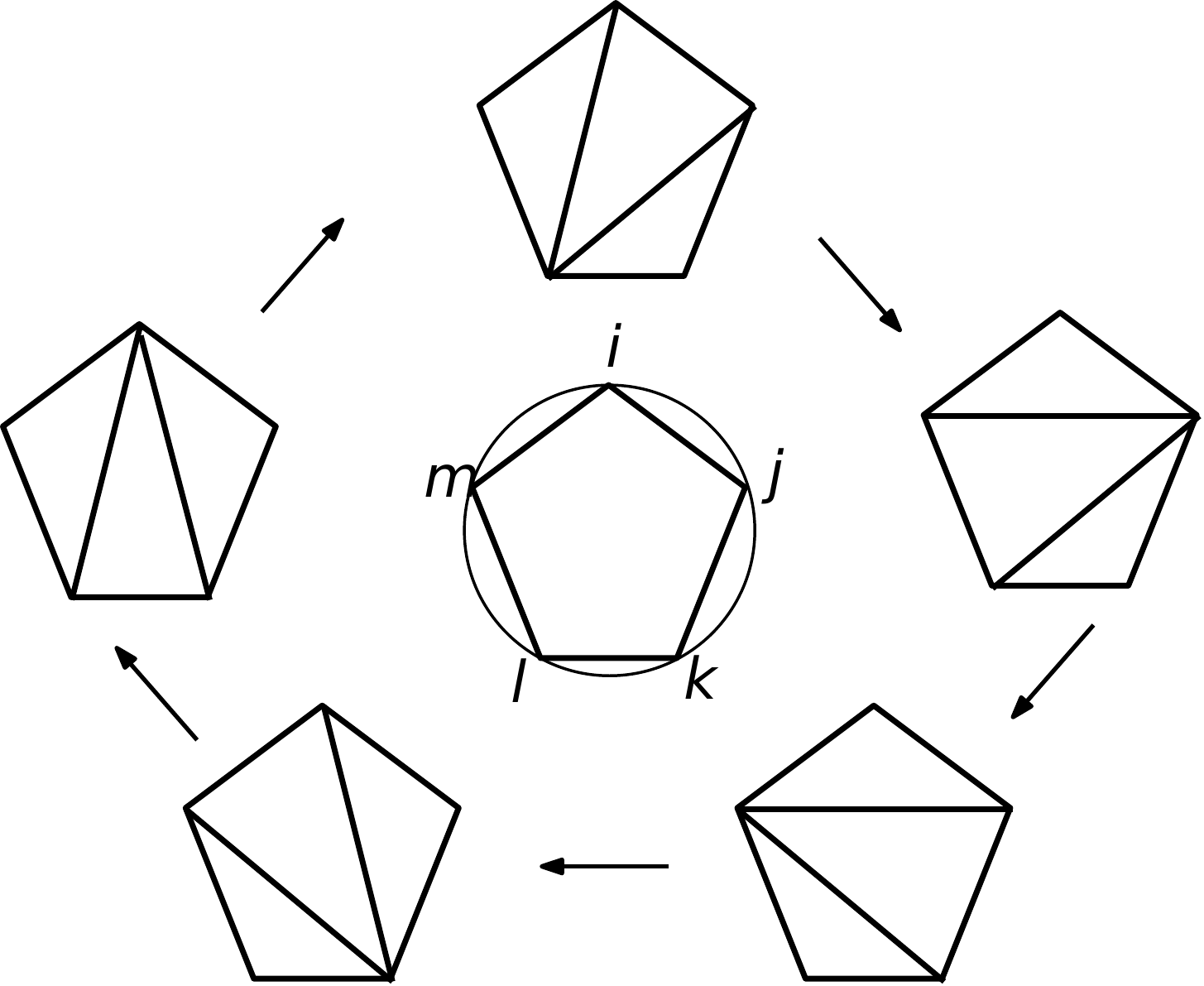}
\caption{Sequences of flips corresponding to relations in $\Gamma_{n}^{4}$}\label{fig:gamma4_relations}
\end{figure}

\subsection{Where do groups $\Gamma$ come from?}

We consider a braid: $n$ pairwise  distinct points move on the plane (or two-surface). They form a Vorono{\"\i} tiling and its dual Delaunay triangulation; as they move generically they undergo flips we write down generators corresponding to flips. Thus for each generic braid we get a word.

{\bf Relations from groups $\Gamma$ are such that isotopic braids yield equal words.

}

\vspace{0.1cm}
{\bf Known fact:}
Pentagon relation yields a braid group action on labeled triangulation.(every edge is labeled by a Laurent polynomials.
\vspace{0.1cm}
As we shall see,
the groups
$\Gamma$
lead to invariants of $3$-manifolds.

\vspace{0.1cm}
and the techniques used for constructing invariants of 3-manifolds,
can lead to finite-dimensional representations of $\Gamma$.

\section{The map from pure braid groups to $\Gamma_{n}^{4}$}\label{sect:braid_to_gamma}

In this section we describe the group homomorphism $f_{n}$ from the pure braid group $PB_{n}$ to $\Gamma_{n}^{4}$~\cite{KM}. The topological background for that is very easy: we consider codimension 1 ``walls'' which correspond to generators (flips) and codimension 2 relations (of the group $\Gamma$). Having this, we construct a map on the level of generators.

Let us use the notation $\bar{n} :=\{1, \cdots n\}$.

\begin{defi}
The pure braid group $PB_{n}$ of $n$ strands is the group given by group presentation generated by $\{ b_{ij} | i,j \in \bar{n}, i<j\}$ subject to the following relations:
\begin{enumerate}
\item $b_{ij}b_{kl} = b_{kl}b_{ij}$ for $i,j,k,l \in \bar{n} $ such that $i<j<k<l$ or $i<k<l<j$;
\item $b_{ij}b_{ik}b_{jk} = b_{ik}b_{jk}b_{ij} = b_{jk}b_{ij}b_{ik}$ for $i,j,k \in \bar{n} $ such that $i<j<k$;
\item  $b_{ik}b_{jk}b_{jl}b_{jk} = b_{jk}b_{jl}b_{jk}b_{ik} $ for $i,j,k,l \in \bar{n} $ such that $i<j<k<l$.
\end{enumerate}
\end{defi}

The mapping $f_{n} : PB_{n} \rightarrow \Gamma_{n}^{4}$ can be formulated as follows:
Let us denote
\begin{center}
$d_{\{p,q, (r,s)_{s}\}}  = \left\{
\begin{array}{cc} % brackets may be (...), [...], \{...\}, or left out
     d_{(pqrs)} & \text{if}~p<q<s, \\
      d_{(prsq)} & \text{if}~p<s<q, \\
  d_{(rspq)} & \text{if}~s<p<q,\\
  d_{(qprs)} & \text{if}~q<p<s, \\
      d_{(qrsp)} & \text{if}~q<s<p, \\
  d_{(rsqp)} & \text{if}~s<q<p.
   \end{array}\right.$
   \end{center}
 \begin{rem}
Notice that the generator $d_{\{p,q, (r,s)_{s}\}}$ corresponds to four points $P_{p},P_{q},P_{r},P_{s}$ such that they are placed on a circle according to the order of $p,q,s$ and the point $P_{r}$ is placed close to $P_{s}$ for the orientation $P_{r}$ to $P_{s}$ to be the counterclockwise orientation, see Fig.~\ref{exa_pts_circle}. The subscription $s$ of $(r,s)_{s}$ means that the point $P_{s}$ does not move, but the point $P_{r}$ will move turning around the point $P_{s}$ after this moment. In other words, when we use the notation $d_{\{p,q, (r,s)_{s}\}}$, we are looking that the point $P_{r}$ is ``moving'' closely to the point $P_{s}$, turning around $P_{s}$. We would like to highlight that $d_{\{p,q, (r,s)_{s}\}} \neq d_{\{p,q, (s,r)_{s}\}}$.

\begin{figure}[h!]
 \centering
 \includegraphics[width = 0.5\textwidth]{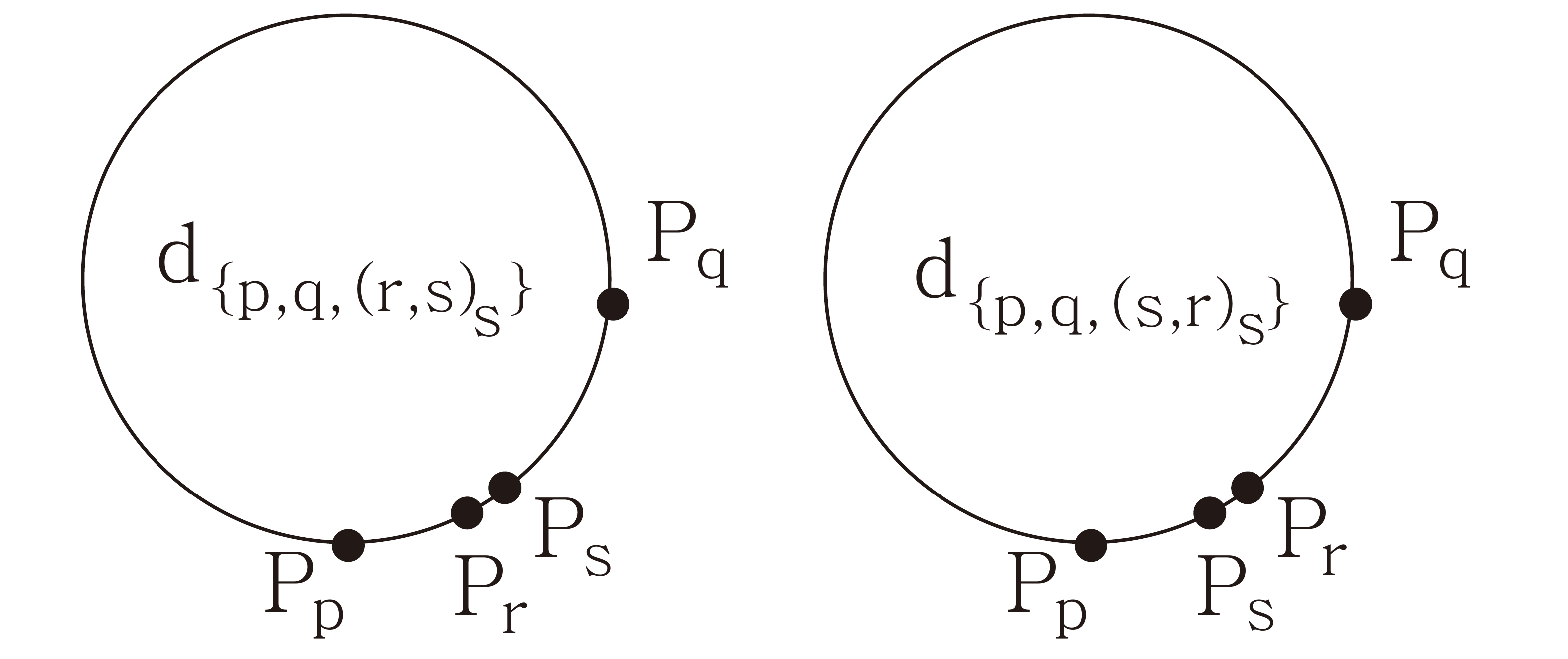}
 \caption{For $p<s<q$, $d_{\{p,q, (r,s)_{s}\}} = d_{(prsq)}$, but $d_{\{p,q, (s,r)_{s}\}} = d_{(psrq)}$.}\label{exa_pts_circle}
\end{figure}
\end{rem}

Let $b_{ij} \in PB_{n}$, $1 \leq i<j \leq n$, be a generator. Consider the elements
\begin{eqnarray*}
\gamma_{i,(i,j)}^{I} = \prod_{p=2}^{j-1}\prod_{q=1}^{p-1}d_{\{p,q,(i,j)_{j}\}},\\
\gamma_{i,(i,j)}^{II} = \prod_{p=1}^{j-1}\prod_{q=1}^{n-j}d_{\{(j-p),(j+p),(i,j)_{j}\}},\\
\gamma_{i,(i,j)}^{III} = \prod_{p=1}^{n-j+1}\prod_{q=0}^{n-p+1}d_{\{(n-p),(n-q),(i,j)_{j}\}},\\
\gamma_{i,(i,j)}=\gamma_{i,(i,j)}^{II}\gamma_{i,(i,j)}^{I}\gamma_{i,(i,j)}^{III}.
\end{eqnarray*}
Now we define $f_{n} : PB_{n} \rightarrow \Gamma_{n}^{4}$ by
$$f_{n}(b_{ij}) = \gamma_{i,(i,(i+1))}\cdots \gamma_{i,(i,(j-1))}\gamma_{i,(i,j)}\gamma_{i,(j,i)}\gamma_{i,((j-1),i)}^{-1}\cdots \gamma^{-1}_{i,((i+1),i)},$$
for $1 \leq i<j \leq n$.

\begin{theorem}[\cite{KM}]\label{thm_to_gamma}
The map $f_{n} : PB_{n} \rightarrow \Gamma_{n}^{4}$, which is defined above, is a well defined homomorphism.
\end{theorem}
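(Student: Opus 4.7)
The plan is to verify that $f_n$ respects the three defining relations of $PB_n$; since $f_n$ is prescribed on generators, well-definedness is equivalent to well-definedness as a map of groups. The geometric picture guides the algebra: $f_n(b_{ij})$ encodes the sequence of Delaunay flips recorded as strand $i$ performs a small loop around strand $j$, and the three factors $\gamma^I_{i,(i,j)}$, $\gamma^{II}_{i,(i,j)}$, $\gamma^{III}_{i,(i,j)}$ correspond to the three arcs of this loop sorted by how the sweeping point passes through the remaining $n-2$ fixed points. This interpretation makes it clear which flips can be independently moved past one another, and which must be combined via the pentagon.

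First I would verify the commutation $b_{ij}b_{kl}=b_{kl}b_{ij}$ when $\{i,j\}\cap\{k,l\}=\emptyset$ or when the pairs are nested. In this case every generator $d_{\{p,q,(i,j)_j\}}$ appearing in $f_n(b_{ij})$ and every $d_{\{p',q',(k,l)_l\}}$ appearing in $f_n(b_{kl})$ share at most two indices, so relation (2) of Definition~\ref{dfn_Gamma} permits one to slide them past each other; the two words are therefore equal in $\Gamma_n^4$.

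Next, for the triangle (Yang–Baxter) relations $b_{ij}b_{ik}b_{jk}=b_{ik}b_{jk}b_{ij}=b_{jk}b_{ij}b_{ik}$, the pentagon relation (3) is the main tool. Both sides of each equality describe isotopic three-strand motions; the difference between them is precisely an isotopy that crosses a codimension-two stratum where five points become simultaneously affected by a single pentagon reconfiguration. Algebraically, I would expand $f_n(b_{ij}b_{ik}b_{jk})$, use the symmetry relation (4) to put all flips into a common normal form, use (2) to gather matching pairs of flips, collapse them with $d^2=1$ from (1), and apply the pentagon relation (3) on each relevant five-element index set $\{i,j,k\}\cup\{\text{two spectators}\}$ to transform the result into $f_n(b_{ik}b_{jk}b_{ij})$, and similarly for the second equality. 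The fourth braid relation $b_{ik}b_{jk}b_{jl}b_{jk}=b_{jk}b_{jl}b_{jk}b_{ik}$ is handled analogously on the index set $\{i,j,k,l\}$ together with spectator indices.

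The main obstacle is bookkeeping. The generator $d_{\{p,q,(r,s)_s\}}$ is defined by a six-case table depending on the cyclic order of $p,q,s$, so whenever a flip from $f_n(b_{ij})$ is compared with a flip from $f_n(b_{ik})$, one must re-express both through (4) to recognise them as the same element of $\Gamma_n^4$, and must track how the piecewise definition transitions across the boundary between the phases $\gamma^I,\gamma^{II},\gamma^{III}$. A clean way to organise this is to interpret each $\gamma$-factor as a tile on the cylinder swept by the moving strand, argue that a pure braid relation corresponds to an ambient isotopy of such cylinders in the complement of the spectator strands, and invoke the general fact (implicit in \cite{KM}) that a generic one-parameter family of such isotopies meets only codimension-one strata (giving pairs of flips that cancel by (1) or commute by (2)) and codimension-two strata of pentagon type (relation (3)). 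Once this geometric reduction is fixed, only a finite check on a small number of strands remains, with all further strands appearing as spectators inside flips that commute with everything in sight; this verifies the three families of pure braid relations and shows $f_n$ is a well-defined homomorphism.
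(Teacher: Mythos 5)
The paper itself gives no proof of this theorem; it is quoted from \cite{KM}. So your proposal has to stand on its own, and as written it has two genuine gaps.

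First, your argument for the commutation relations is simply false as stated. You claim that every generator $d_{\{p,q,(i,j)_j\}}$ occurring in $f_n(b_{ij})$ and every $d_{\{p',q',(k,l)_l\}}$ occurring in $f_n(b_{kl})$ share at most two indices. But the spectator indices $p,q$ range over essentially all of $\bar n$, so $f_n(b_{ij})$ contains generators supported on the $4$-element set $\{k,l,i,m\}$ and, symmetrically, $f_n(b_{kl})$ contains generators supported on $\{i,j,k,m'\}$; choosing $p,q,p',q'$ appropriately one gets two generators supported on the \emph{same} four-element set $\{i,j,k,l\}$ (or sets meeting in three elements). Relation (2) of Definition~\ref{dfn_Gamma} then does not apply, and two generators on the same quadruple that are not identified by relation (4) need not commute. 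So even the ``easy'' case already requires the pentagon relation or a genuine geometric argument, and your proof of it collapses.

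Second, for the triangle and four-term relations you give only a programme (``expand, normalise, gather, cancel, apply the pentagon on each relevant five-element set''), and when you confront the real difficulty you resolve it by ``invoking the general fact (implicit in \cite{KM}) that a generic one-parameter family of isotopies meets only codimension-one and pentagon-type codimension-two strata, after which only a finite check remains.'' That general fact \emph{is} the content of the theorem: it is exactly the statement that the relations of $\Gamma_n^4$ suffice to make isotopic braids map to equal words, which is what \cite{KM} proves. Neither the transversality/stratification claim nor the residual finite check is carried out, so the proposal is a correct outline of the strategy one would expect, but not a proof.
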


\section{The Ptolemy relation and the pentagon relation}\label{sect:ptolemy_relation}

For an inscribed quadrilateral with consequent edge-lengths a,b,c,d and two diagonals x, y one has: $ac + bd = xy$.

\begin{figure}[h]
\includegraphics[width=2.9cm]{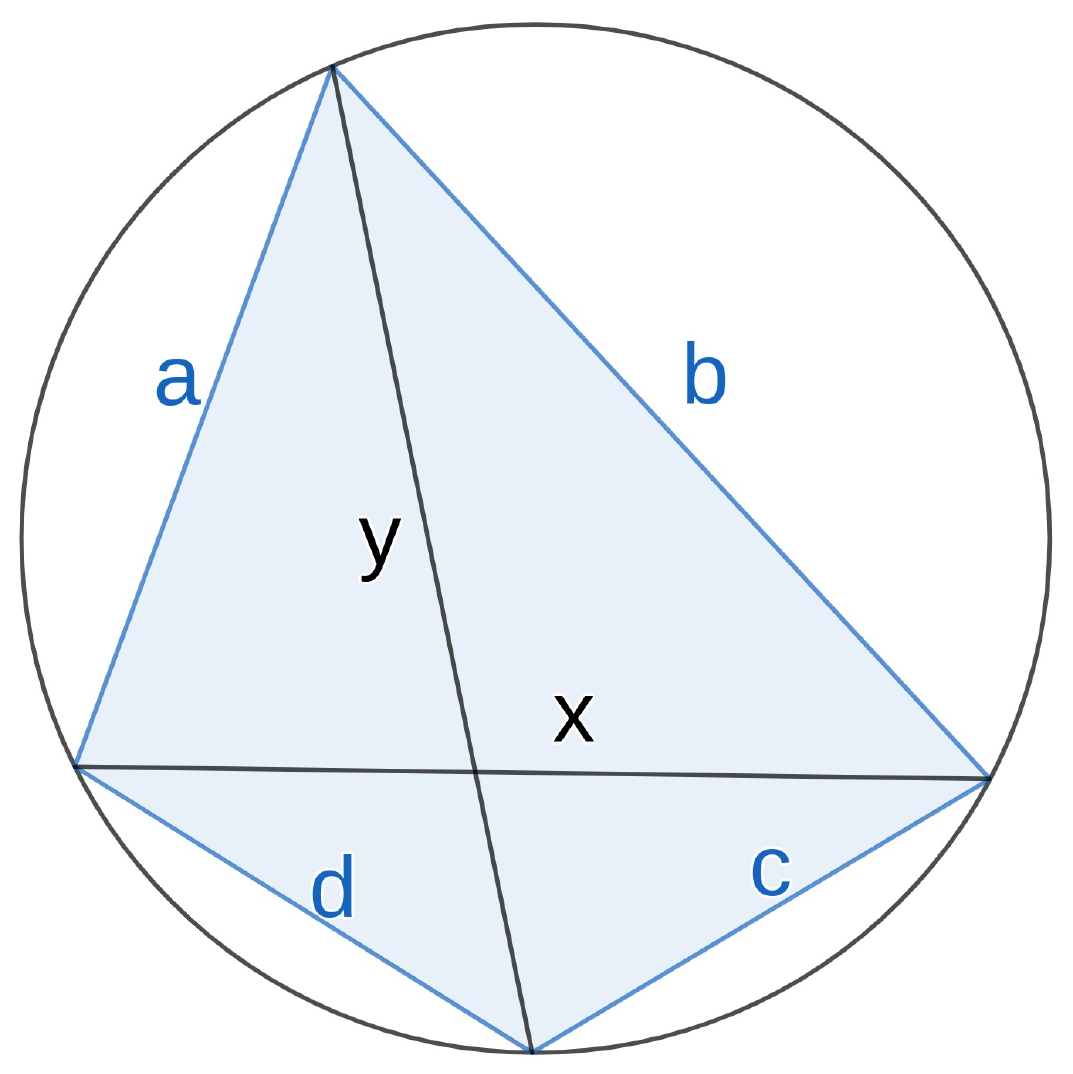}
\centering
\caption{An inscribed quadrilateral}
\label{quadrilateral}
\end{figure}

Another way to thinking of this relation is as follows. We enumerate the vertices of the quadrilateral by numbers from  $1$ to $4$ and associate variable $x_{ij}$ with the edge oriented from $i$ to $j$, so that the opposite edge is assigned $x_{ji} = -x_{ij}$.
Then the Ptolemy relation looks like:
\begin{equation}\label{eq:ptolemy}
x_{12}x_{34} + x_{23}x_{14} + x_{13}x_{42} = 0.
\end{equation}
This equation is invariant under permutation of indices $(1, 2, 3, 4)$, hence it is valid also for self-intersecting quadrilaterals.

{The signed Ptolemy relation}

\begin{figure}[h]
\includegraphics[width=0.5\textwidth]{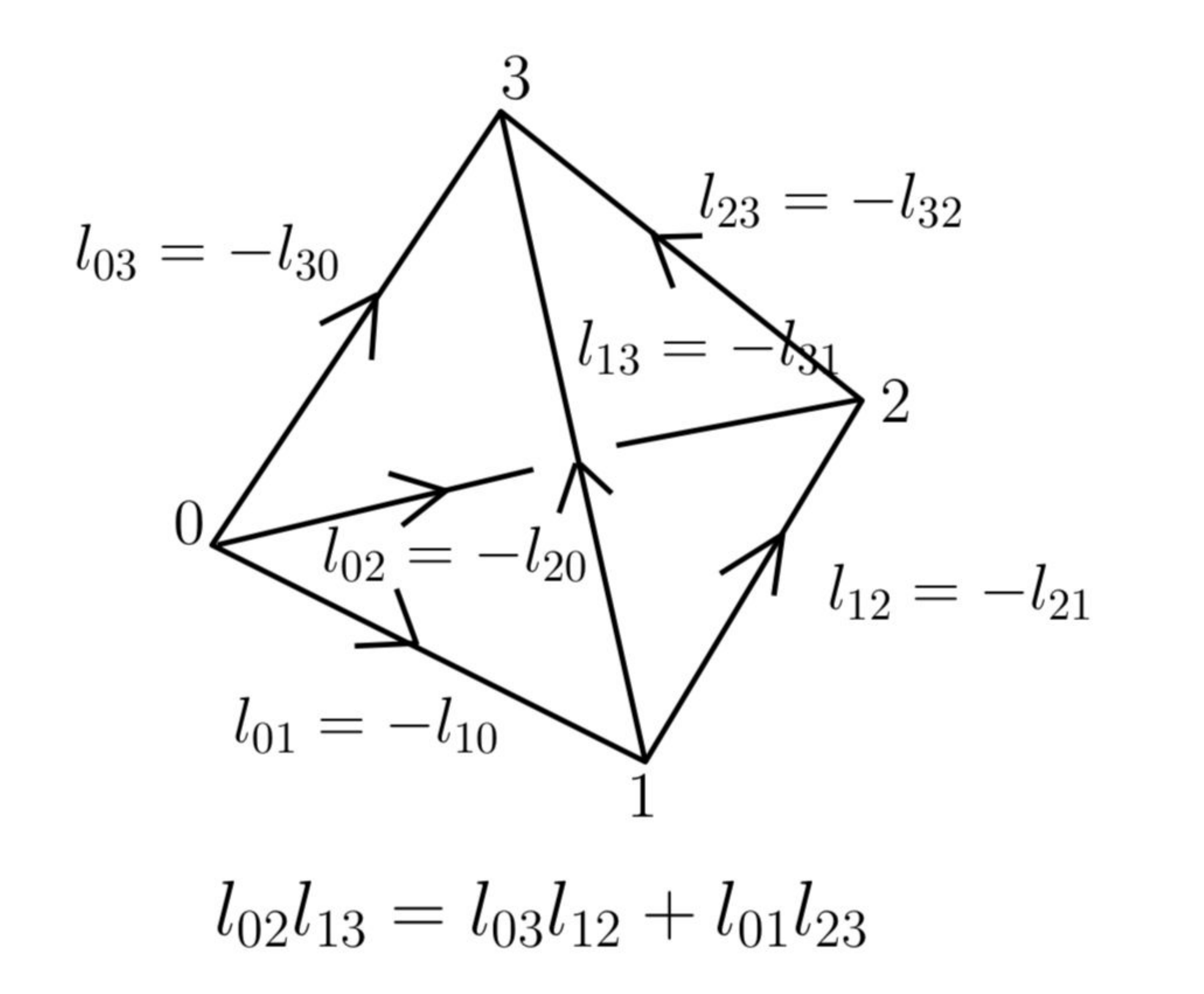}
\centering
\caption{An inscribed quadrilateral}
\label{quadrilateral}
\end{figure}

The label corresponds to an oriented edge. Inverting the edge will change the
sign of the label.

The ``oriented'' Ptolemy relation defined as above is invariant under any permutation of
indices $1,2,3,4$.

\section{The Pentagon relation}\label{sect:pentagon_relation}
There are exactly five triangulations of a pentagon with no additional points.

Each of these triangulations contains two non-intersecting (adjacent) diagonals.

For each triangulation, one can flip one of the diagonals in question.

Performing the five consequent flips, we return to the initial position.

So, schematically, the pentagon relation looks like

%<здесь картинку с пятиугольниками>

This relation was known to specialists in cluster algebras~\cite{FZ1, FZ2}.

For us, it is the main relation in groups $\Gamma_{n}^{4}$.

The geometric meaning of this relation from the point of view of the braid group is
the following:

we consider five points almost on the same circle. The Delaunay triangulation
consists of three triangles tiling the pentagon.

As we move these points a little, the Delaunay triangulation undergo five
consecutive flips and since there is no braiding in this process, the element
of the group $\Gamma$ represented by the product of such flips has to
be trivial.

\section{Recoupling theory}\label{sect:recoupling_theory}

The essence of the recoupling theory \cite{KL,KR} is the following.

We draw trivalent graphs (on the plane or on 2-surface) and mark labels by integer numbers.

Whenever we see number $k$, we can think that there is a bunch of $k$ parallel strands along this edge.
At each vertex each of those strands turns right or left (Fig.~\ref{fig:strand_triangle}).

\begin{figure}[h]
\centering\includegraphics[width = 0.3\textwidth]{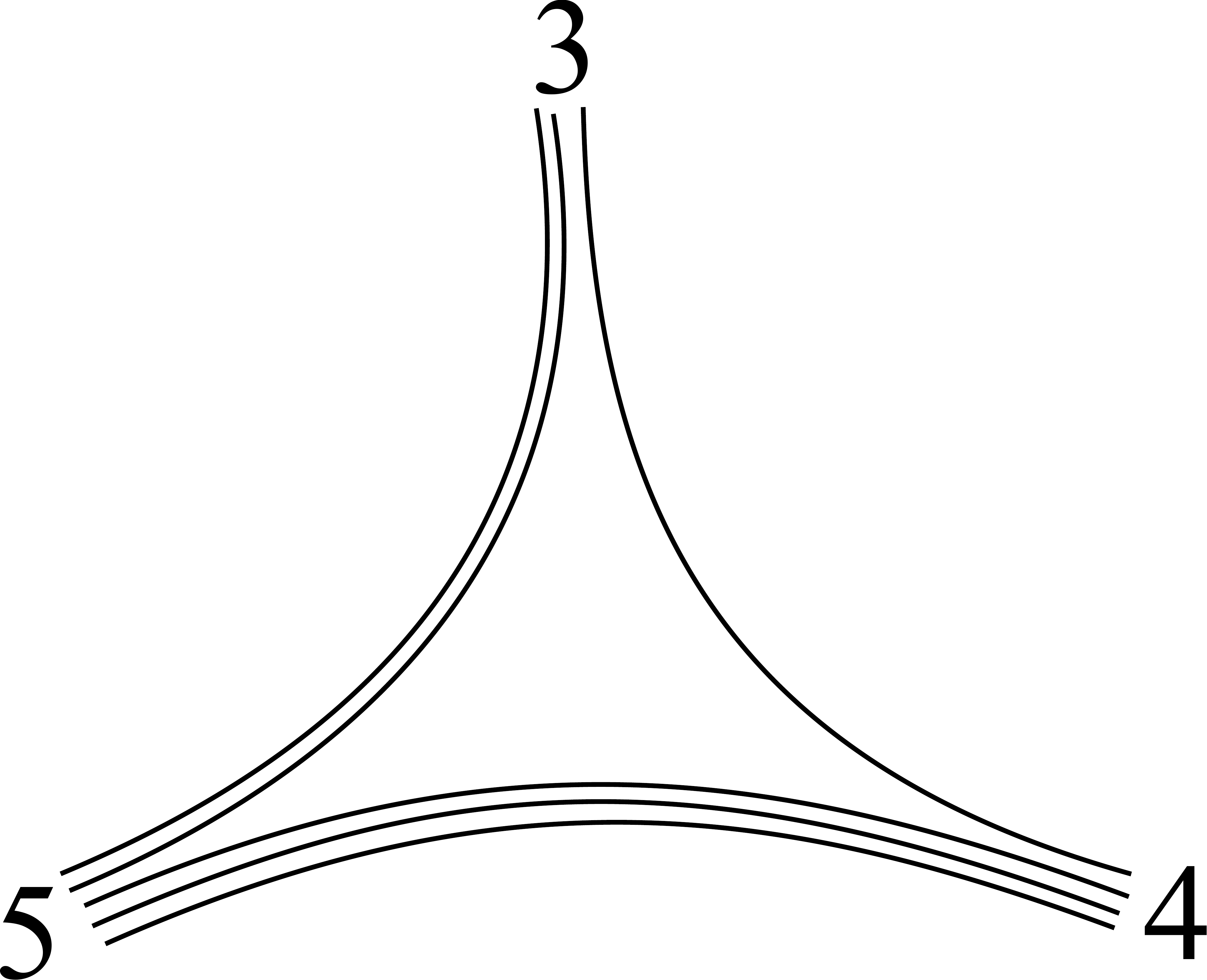}
\caption{Strands at a vertex}\label{fig:strand_triangle}
\end{figure}
This naturally yields the two conditions, the parity and the triangle inequality.

Besides, fix an integer $r\ge 3$ and restrict ourselves to the situation when
the numbers of strands $f(a),f(b),f(c)$ for three edges coming to one vertex satisfy
$f(a)+f(b)+f(c)\le 2r$. This leads to the notion of an {\em admissible colouring}.

As the time $t$ goes, the tree undergoes flips.

Fix an integer $r\ge 3$. Denote $q=e^{\frac{i\pi}r}\in\mathbb C$. For any regular value $t$, consider the set $F_r(t)=F_r(G(t))$ of admissible colourings of the edges of the Delaunay triangulation $G(t)$, i.e. maps $f: E(G(t))\to\N\cup\{0\}$ such that for any triangle of $G(t)$ with edges $a,b,c$ one has
\begin{enumerate}
\item $f(a)+f(b)+f(c)$ is even;
\item $f(a)+f(b)\ge f(c)$, $f(a)+f(c)\ge f(b)$, $f(b)+f(c)\ge f(a)$ ;
\item $f(a)+f(b)+f(c)\le 2r-4$.
\end{enumerate}
Let $V_r(t)$ be the linear space with the basis $F_r(t)$.

\begin{figure}[h]
\centering\includegraphics[width = 0.4\textwidth]{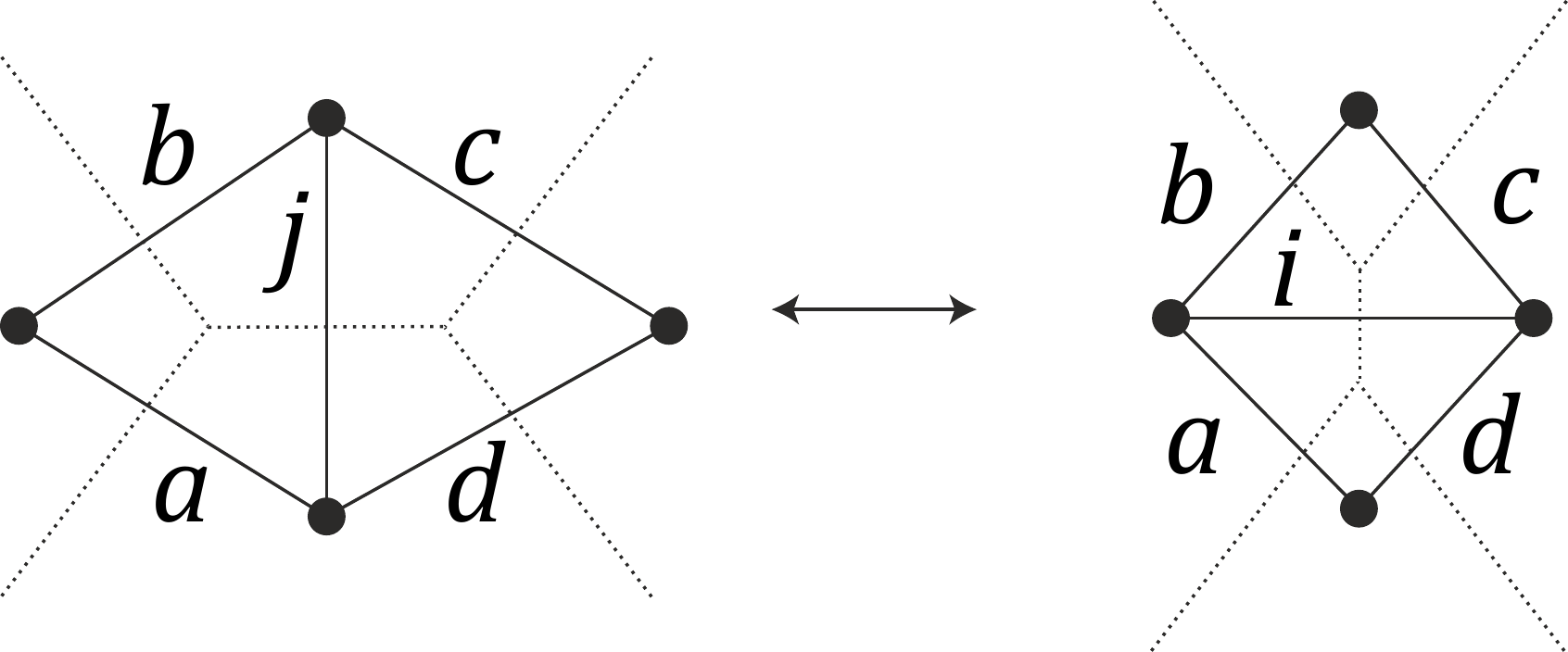}
\caption{Transformation of Delaunay triangulation}\label{tiling_change}
\end{figure}

Consider a singular value $t_k$. At the moment $t_k$ a transformation (flip) of the triangulation occurs (Fig.~\ref{tiling_change}). By recoupling theorem~\cite[Theorem 6.2]{KL} one can define a linear operator $A_r(t_k): V_r(X(t_k-0))\to V_r(X(t_k+0))$ whose coefficients are $q-6j$-symbols. More detailed, the triangulations $G(t_k-0)$ and $G(t_k+0)$ differ by an edge: $j=E(G(t_k-0))\setminus E(G(t_k+0))$, $i=E(G(t_k+0))\setminus E(G(t_k-0))$. Let $a,b,c,d$ be the edges incident to same triangles as $i$ or $j$. For any $f\in F_r(t_k-0)$ considered as a basis element of $V_r(t_k-0)$, we set
\[
A_r(t_k)f=\sum_{g\in F_r(t_k-0)\colon f=g|_{E(G(t_k-0))\cap E(G(t_k+0))}}
\left\{\begin{array}{ccc}
g(a) & g(b) & g(i)\\
f(c) & f(d) & f(j)
\end{array}\right\}_q\cdot g,
\]
where $\left\{\begin{array}{ccc}
\alpha & \beta & \xi\\
\gamma & \delta & \eta
\end{array}\right\}_q$ is the $q-6j$-symbol (for the definition see~\cite[Proposition 11]{KL}).

Let $A_r(\beta)=\prod_{k=1}^m A_r(t_k): V(X)\to V(X)$ be the composition of operators $A(t_k)$.

\begin{theorem}
The operator $A_r(\beta)$ is a pure braid invariant.
\end{theorem}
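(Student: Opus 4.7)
The plan is to show that $A_r$ descends to a linear representation $\rho\colon \Gamma_n^4 \to \mathrm{End}(V_r(X))$ of the group $\Gamma_n^4$, and to conclude by composing with the homomorphism $f_n\colon PB_n \to \Gamma_n^4$ of Theorem~\ref{thm_to_gamma}. Given a pure braid $\beta$ and any generic representative motion of the $n$ points, the sequence of Delaunay flips produces a word $w(\beta)$ in the generators $d_{(ijkl)}$ of $\Gamma_n^4$, and by construction $A_r(\beta) = \rho(w(\beta))$; since the class of this word in $\Gamma_n^4$ equals $f_n(\beta)$ (this is the geometric content of Theorem~\ref{thm_to_gamma}), any two generic representatives of $\beta$ yield the same operator as soon as $\rho$ is shown to be well defined.

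To construct $\rho$, I send each generator $d_{(ijkl)}$ to the $q$-$6j$ flip operator described in the statement and verify the four families of relations of Definition~\ref{dfn_Gamma}:
\begin{enumerate}
\item Involutivity $d_{(ijkl)}^2 = 1$ reduces to the orthogonality of $q$-$6j$-symbols: the change-of-basis matrix between the two admissible colorings of a quadrilateral squares to the identity.
\item Far commutativity $d_{(ijkl)} d_{(stuv)} = d_{(stuv)} d_{(ijkl)}$ for $|\{i,j,k,l\}\cap\{s,t,u,v\}|<3$ is immediate, because the two flips involve disjoint edges and the operators act nontrivially only on independent tensor factors of the admissible labelings.
\item The pentagon relation $d_{(ijkl)} d_{(ijlm)} d_{(jklm)} d_{(ijkm)} d_{(iklm)} = 1$ is precisely the Biedenharn--Elliott identity for the $q$-$6j$-symbols, which is the content of~\cite[Theorem 6.2]{KL}.
\item The eight-fold symmetry of Relation~4 matches the built-in tetrahedral symmetry of the $q$-$6j$-symbol under permutations of its six arguments.
\end{enumerate}

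The main obstacle is item~3, the pentagon identity, which is the deep input from recoupling theory; items~1, 2, and~4 are routine consequences of the definition of the $q$-$6j$-symbol and of the tensor-product structure of $V_r$. Once the four identities are verified, $\rho$ is a well-defined representation of $\Gamma_n^4$, and the invariance of $A_r(\beta) = \rho(f_n(\beta))$ under isotopy of pure braids follows at once from Theorem~\ref{thm_to_gamma}, since $f_n$ is a homomorphism and hence depends only on the class of $\beta$ in $PB_n$.
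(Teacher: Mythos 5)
Your core inputs are the right ones --- the Biedenharn--Elliott identity of \cite[]{} recoupling theory for the pentagon, and disjointness of the affected edges for commutativity --- and in that sense you have identified the same substance as the paper. But the packaging as a linear representation $\rho\colon\Gamma_n^4\to\mathrm{End}(V_r(X))$ does not work as stated, and this is a genuine gap rather than a stylistic difference. There is no single space $V_r(X)$: the space $V_r(G)$ of admissible colourings depends on the triangulation $G$, and a flip is a map $V_r(G)\to V_r(G')$ between \emph{different} spaces. Worse, a generator $d_{(ijkl)}$ only determines an operator when $\{i,j,k,l\}$ actually spans a quadrilateral (two adjacent triangles) of the current triangulation, so the assignment $d_{(ijkl)}\mapsto A_r$ is not even defined on all generators for a fixed $G$. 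What you would get, at best, is a representation of the Ptolemy-type \emph{groupoid} of triangulations and flips (the groupoid structure mentioned in the introduction), not of the group $\Gamma_n^4$; and then you would still need an argument that the word of flips read off from a generic motion is well defined up to exactly those relations. Appealing to Theorem~\ref{thm_to_gamma} does not supply this: $f_n$ is defined by explicit formulas on the generators $b_{ij}$, and the identification of $f_n(\beta)$ with the flip word of an arbitrary generic representative is precisely the kind of genericity statement that has to be proved, not assumed.

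The paper's proof avoids all of this by arguing directly on a generic isotopy $\beta_s$ of braids: the only moments where the flip sequence can change are the codimension-two singularities, namely (i) five points on one circle and (ii) two independent concircular quadruples. At such a moment the two flip sequences being compared connect the \emph{same} pair of triangulations, so the pentagon identity (Biedenharn--Elliott, \cite[Proposition 10]{KL}) and the commutativity identity can be checked where they actually make sense as equalities of linear maps. If you want to salvage your route, recast $\rho$ as a functor from the groupoid of triangulations, and note also that your justification of far commutativity via ``independent tensor factors'' is imprecise: $V_r(G)$ is not a tensor product over edges, since admissibility couples the edges through the triangle conditions; commutativity holds because the two flips modify disjoint edges and the $q$-$6j$ coefficients depend only on the local data, not because of a tensor decomposition.
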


\begin{proof}
Consider a pure braid isotopy $\beta_s$, $s\in[0,1]$. We can assume the isotopy is generic. This means that the isotopy includes a finite number of values $s_l$ such that the braid $\beta_{s_l}$ has a unique value $t^*$ such that the set $X_s(t^*)$ contains one of the following singular configurations
\begin{enumerate}
\item five points which belong to one circle (or line);
\item two quadruples of points which lie on one circle. The circles for the quadruples are different.
\end{enumerate}
The first case leads to the pentagon relation for Delaunay triangulation transformations (Fig.~\ref{flip_5-gon}). This relation induces a pentagon relation for the operator $A_r(\beta)$ which holds by theorem~\cite[Proposition 10]{KL}.
\begin{figure}[h]
\centering\includegraphics[width = 0.6\textwidth]{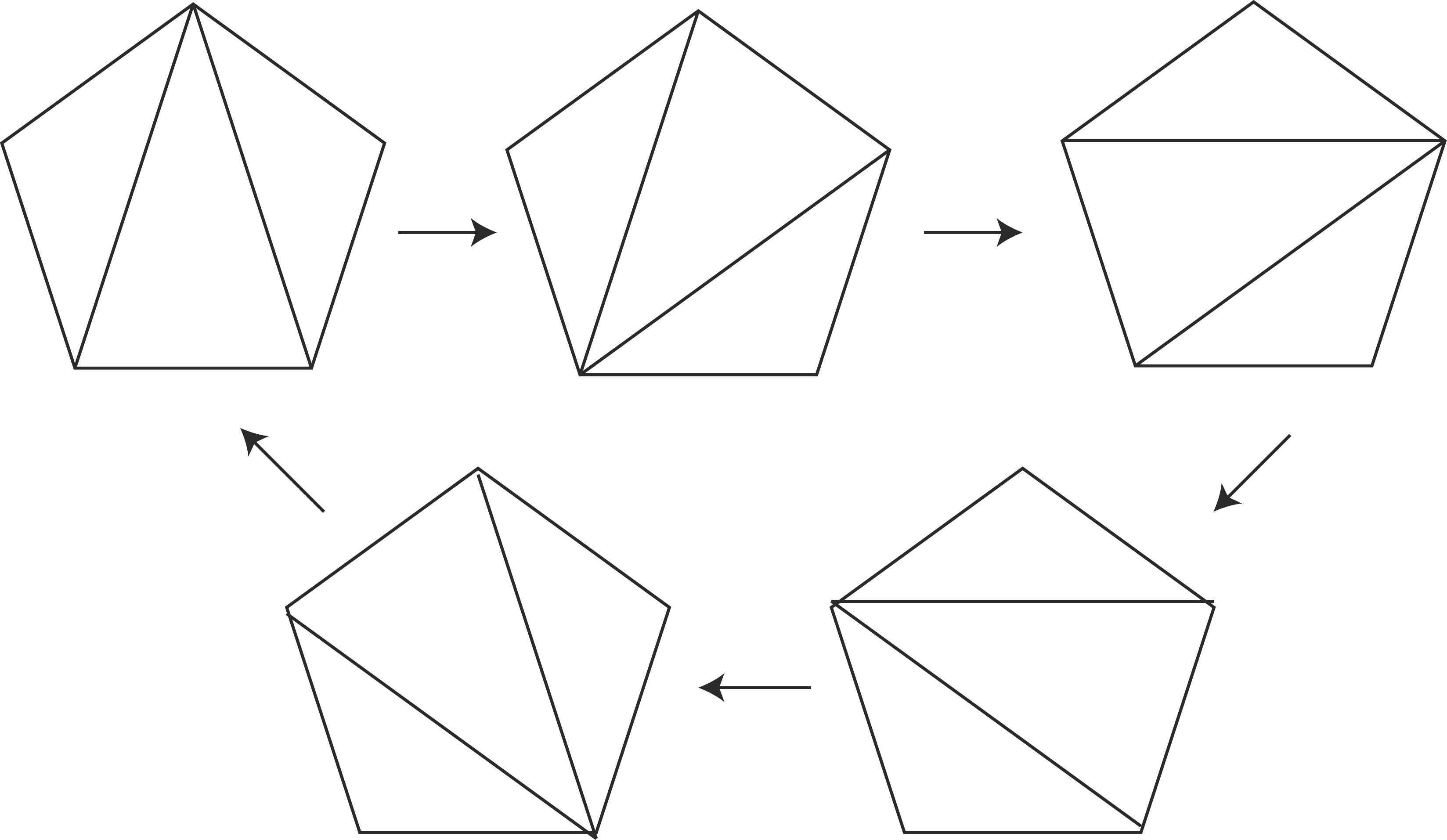}
\caption{Pentagon relation}\label{flip_5-gon}
\end{figure}

The second case leads to commutativity relation $A_1A_2=A_2A_1$ where $A_1$, $A_2$ correspond to flips on the given quadruples of points. The relation holds by construction of the operators $A_1$, $A_2$.
\end{proof}

Thus, we get a construction analogous to that in~\cite[Section 16.6]{FKMN} which uses Ptolemy relation.

\begin{example}
Consider the point configuration $P_1=(0,0)$, $P_2=(\frac 13,0)$, $P_3=(-\frac 12,\frac{\sqrt 3}2)$, $P_3=(-\frac 12,-\frac{\sqrt 3}2)$, $P_5=(1,0)$. Consider the pure braid $\beta$ given by dynamics of $P_2$ moving around $P_1$: $P_2(t)=(\frac 13\cos t, \frac 13\sin t)$, $t\in[0,2\pi]$. The dynamics produces six different Delaunay triangulations. For $r=4$, each triangulation has $160$ admissible colourings. The operator $A_4(\beta)$ of dimension $160$ is not identity (it has the eigenvalue $-1$ of multiplicity $20$). Thus, the invariant $A_r$ is not trivial.
\end{example}

\section{Special spines of $3$-manifolds}\label{sect:special_spines}

Here we shall give the definition of special spine (see~\cite{Matveev}).

\begin{defi}\label{def:subpolyhedron}
 An \emph{simple polyhedron} is a finite CW-complex $P$ such that each point $x$ in $P$ has a neighbourhood homeomorphic to one of the three configurations (see Figure~\ref{fig:spinePointTypes}):
\begin{enumerate}
   \item a disc;
    \item three discs intersecting along a common arc in their boundaries, where the point $x$ is on the common arc; or
    \item the \emph{butterfly}, which is a configuration built from four arcs that meet at the point $x$, with a face running along each of the six possible pairs of arcs.
\end{enumerate}
%Given an admissible polyhedron $\Sigma$, we write $\Sigma^{(0)}$ for the points with butterfly neighbourhoods, $\Sigma^{(1)}$ for the points with three-disc neighbourhoods, and $\Sigma^{(2)}$ for the points with disc neighbourhoods.
%We refer to the connected components of $\Sigma^{(0)}$, $\Sigma^{(1)}$ and $\Sigma^{(2)}$ as \emph{vertices}, \emph{edges} and \emph{faces}, respectively.

\begin{figure}[!ht]
    \centering
    \includegraphics[width=0.25\textwidth]{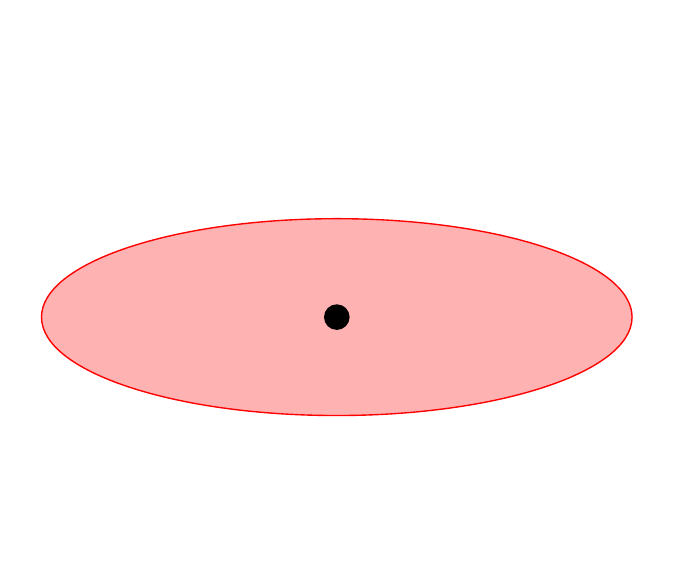}\quad
    \includegraphics[width=0.25\textwidth]{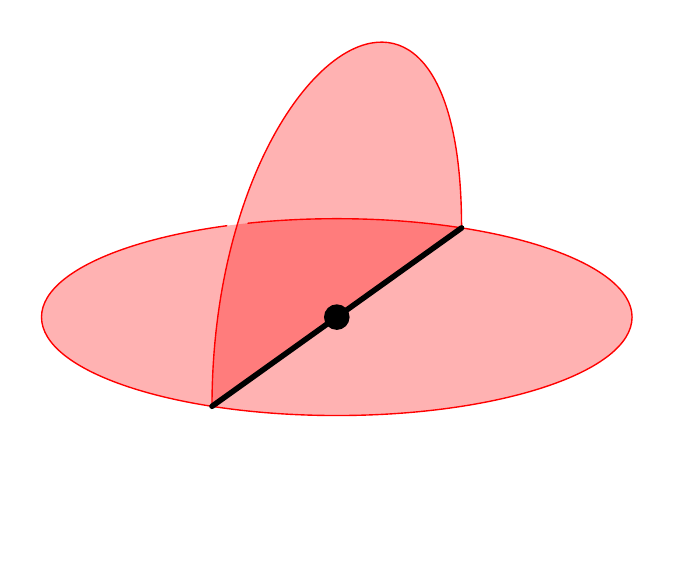}\quad
    \includegraphics[width=0.25\textwidth]{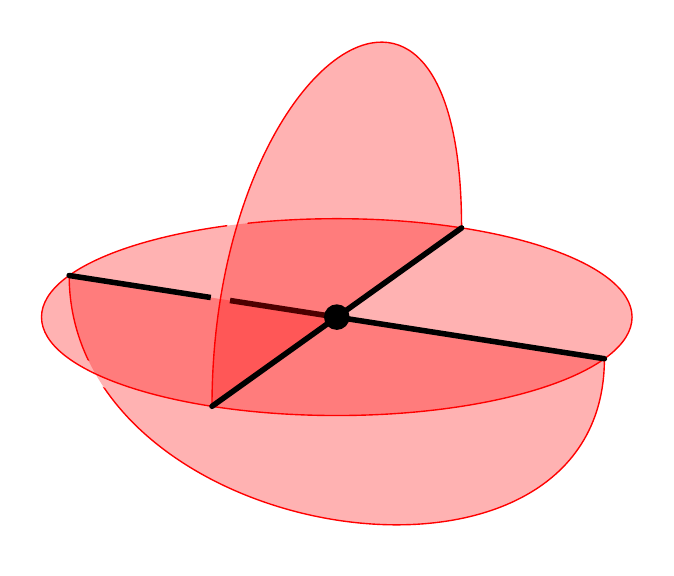}
    \caption{The three types of points in a simple polyhedron $P$.}
    \label{fig:spinePointTypes}
\end{figure}

Each simple polyhedron is naturally stratified. In this stratification each stratum of dimension $2$ (a $2$-component) is a connected component of the set of nonsingular points. Strata of dimension $1$ consist of open or closed triple lines, and dimension $0$ strata are true vertices.

A simple polyhedron $P$ is called \emph{special} if:
\begin{enumerate}
\item Each 1-stratum of $P$ is an open 1-cell.
\item Each 2-component of $P$ is an open 2-cell.
\end{enumerate}

A \emph{spine} of a compact connected $3$-manifold with boundary $M$ is a subpolyhedron $P\subset Int M$ such that $M\setminus P$ is homeomorphic to $\partial M\times [0,1)$. By a spine of a closed connected $3$-manifold $M$ we mean a spine of $M\setminus Int B^3$ where $B^3$ is a $3$-ball in $M$.

\end{defi}

\section{Turaev--Viro invariants}\label{sect:turaev_viro_invariants}
Consider a special spine of a $3$-manifold and associate
weights of butterflies to true vertices. These
weight depend on $6$ parameters and they are called ($q-6j$)-symbols).

\begin{figure}[h]
%\centering
%\begin{minipage}[t]{0.4\textwidth}
%\centering
%
%\includegraphics[width=1\linewidth]{MP1.pdf}
%
%\end{minipage}
%\begin{minipage}[t]{0.4\textwidth}
%\centering
%
%\includegraphics[width=1\linewidth]{MP2.pdf}
%\end{minipage}

\centering\includegraphics[width=0.8\textwidth]{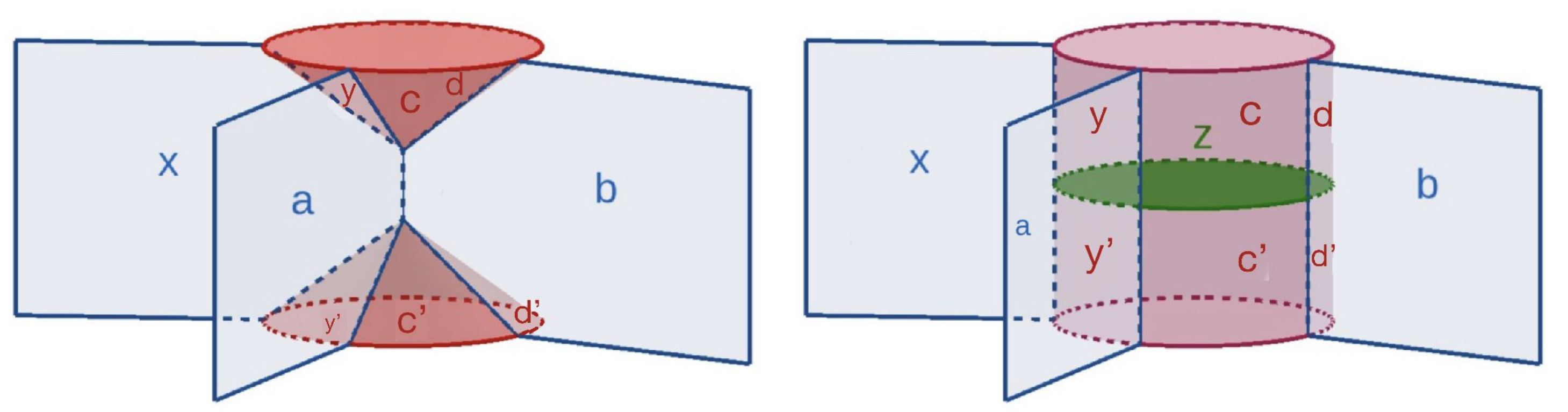}
\caption{The Matveev-Piergallini move}
\label{MP}

\end{figure}

\section{$(q-6j)$-symbols}\label{sect:q6j-symbols}

Each butterfly has $6$ wings which are naturally split into three pairs of {\em opposite ones}.
It is naturally to encode a butterfly by the $1$-frame of the $3$-simplex.

Hence, a coloured butterfly associates $6$ numbers (or elements of some ring).
Since each butterfly has many symmetries, it is natural to require that these assignments
are invariant under rotations. In other words, we associate numbers ($(q-6j)$-symbols)
\[
\left[\begin{array}{ccc} a & b & c\\ d & e & f\end{array}\right]
\]
to hexuples of integers $(a,b,c,d,e,f)$ in such a way that
\[
\left[\begin{array}{ccc} a & b & c\\ d & e & f\end{array}\right] = \left[\begin{array}{ccc} b & a & c\\ e & d & f\end{array}\right],\quad
\left[\begin{array}{ccc} a & b & c\\ d & e & f\end{array}\right] = \left[\begin{array}{ccc} f & e & a\\ c & b & d\end{array}\right].
\]

\section{Constructing invariants of $3$-manifolds from $\Gamma$}\label{sect:3manifold_invariants_from_gamma}

Below, we give a hint how to construct the invariant geometrically.

However, there is a tiny gap in the argument.

Motivated by this example, we present an algebraic description of the invariant.

For each triangle with sides $(a,b,x)$
and two inscribed quadrilaterals $(a,b,c,d),(a,b,c',d')$
with diagonals $y,y'$
respectively there exists a unique $z$
giving rise to three (self-intersecting)
quadrilaterals.

Solution: completing a pentagon.
\begin{figure}
\centering\includegraphics[width=0.35\textwidth]{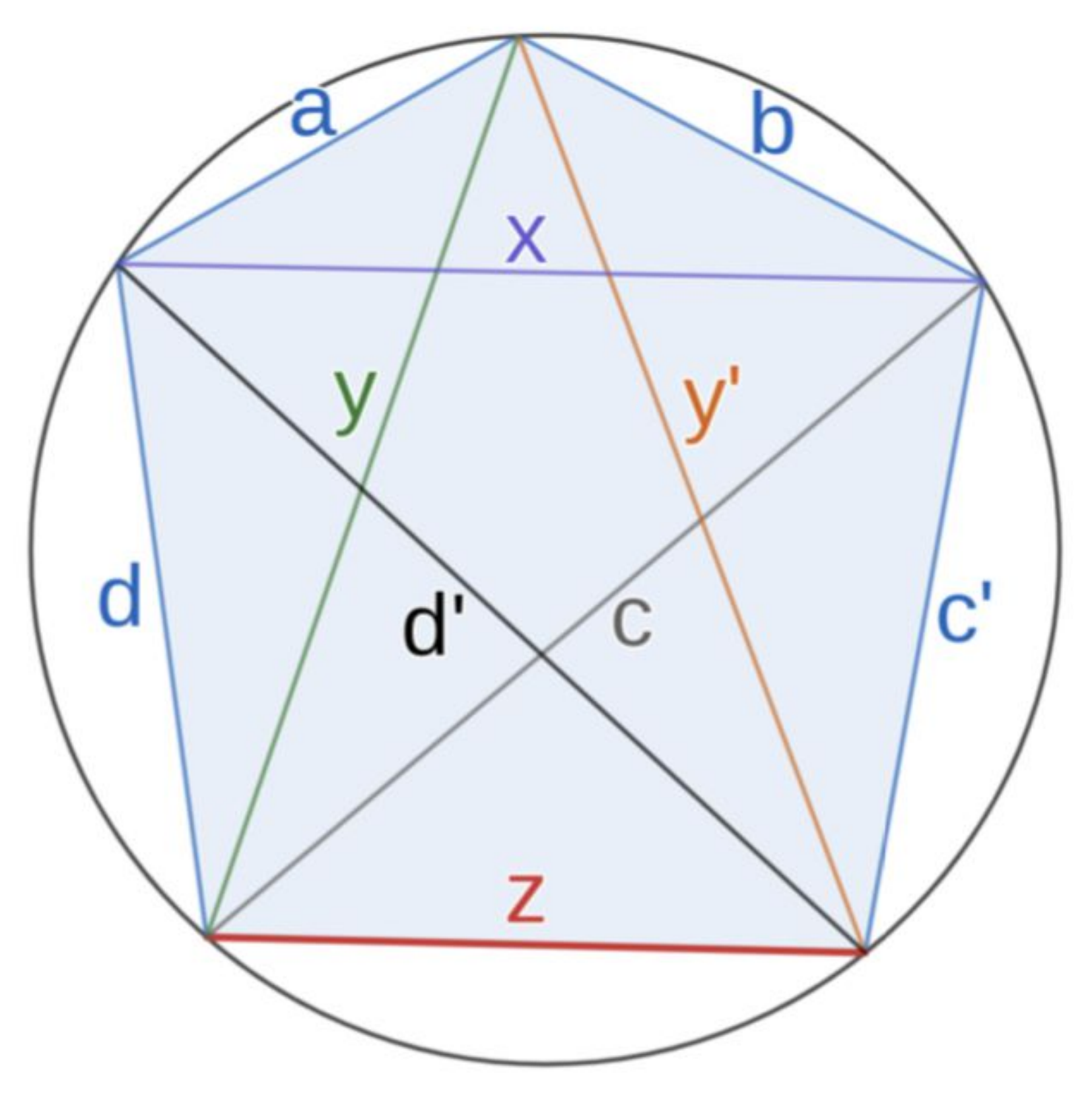}
\caption{Completing quadrilaterals to a pentagon}
\end{figure}

\subsection{Ptolemy yields Biedenharn--Elliot}

\begin{figure}[htp]
\centering
\includegraphics[width=.24\textwidth]{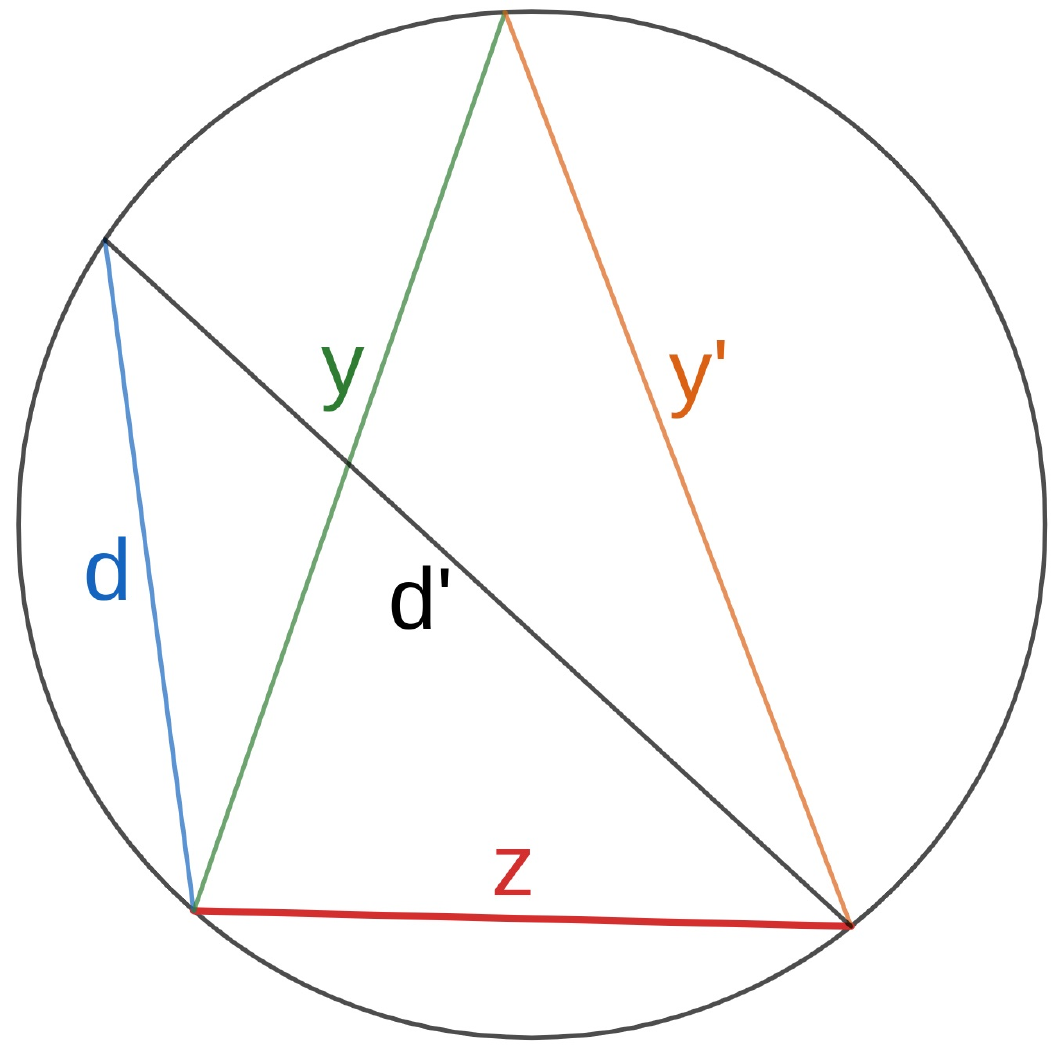}\hfill
\includegraphics[width=.24\textwidth]{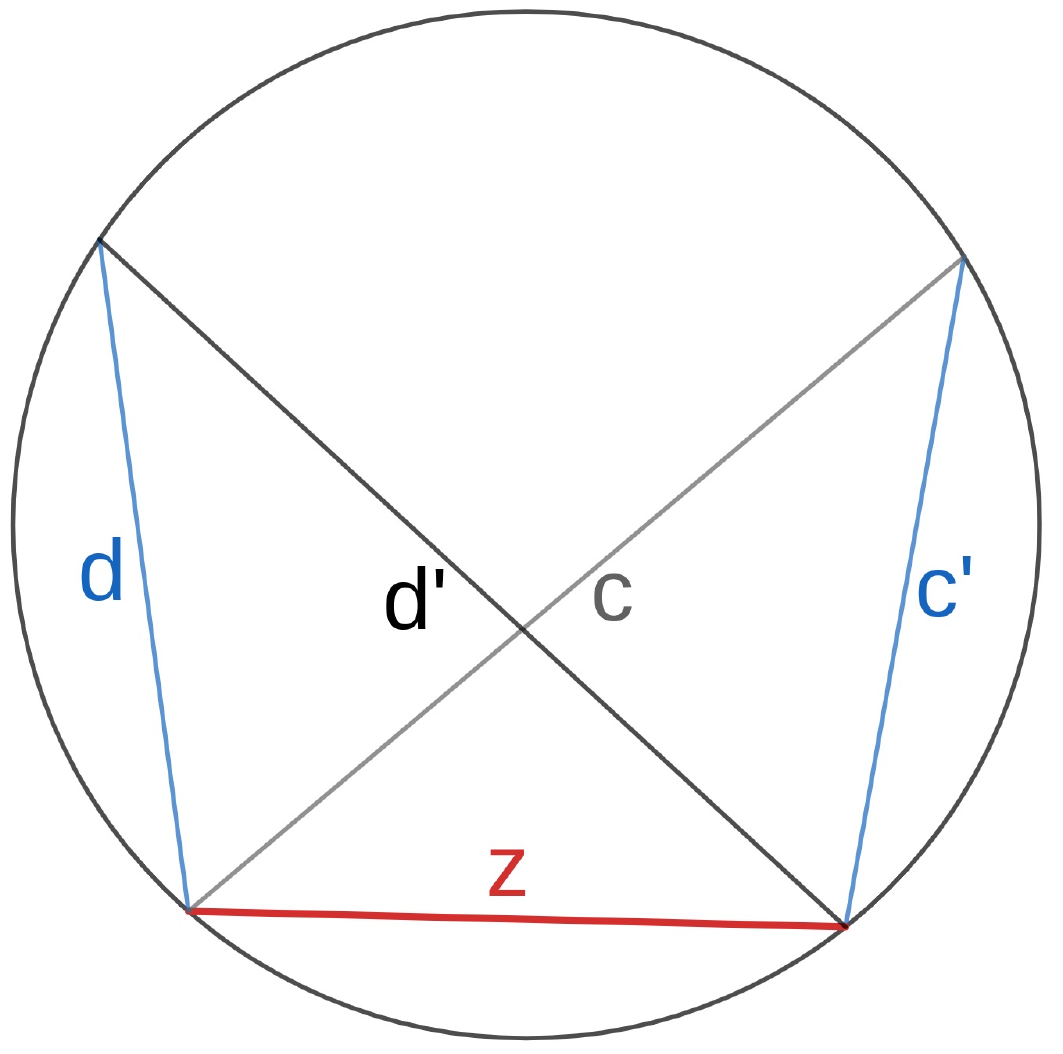}\hfill
\includegraphics[width=.24\textwidth]{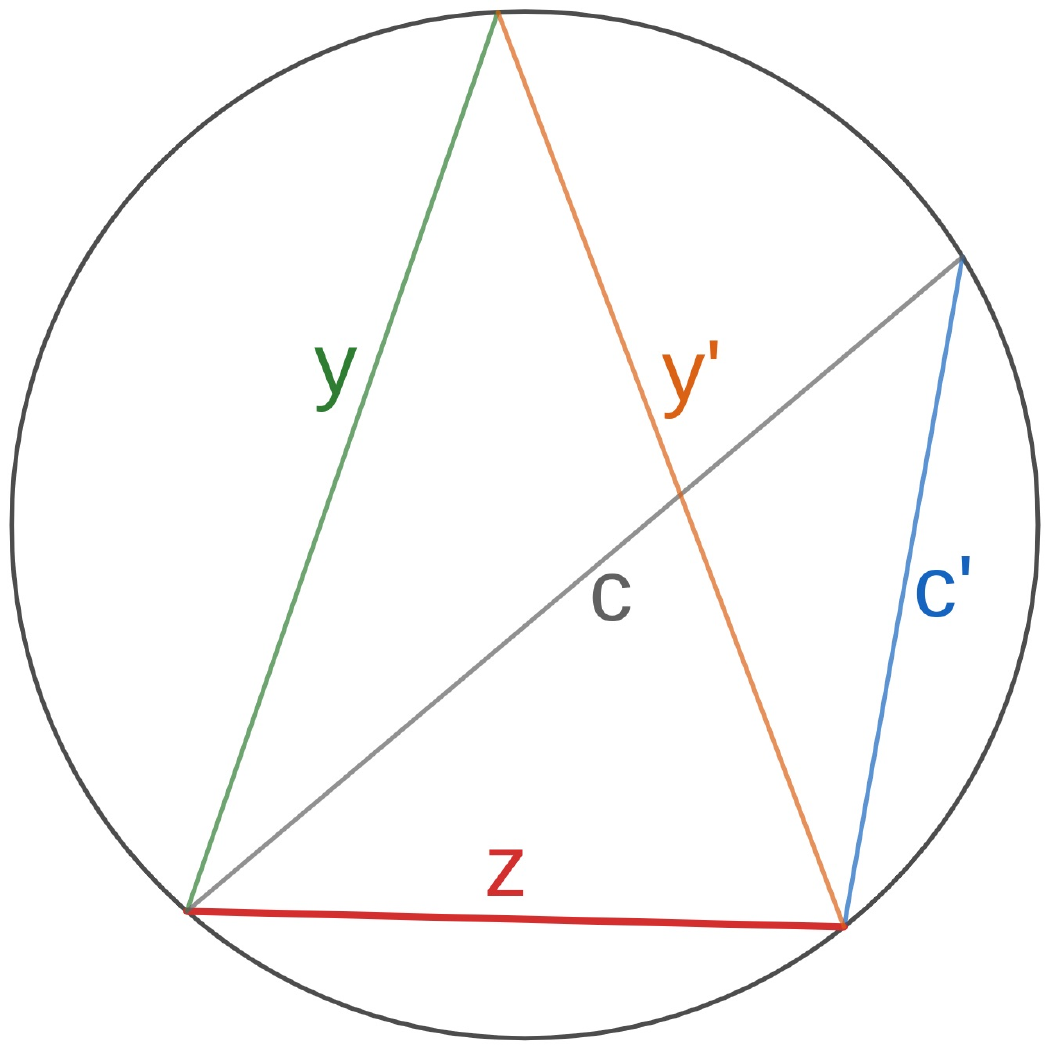}
\caption{}\label{fig:figure3}

\end{figure}

Associate real-valued variables to (oriented) $2$-cells of a
special cell.
With each true vertex we have a butterfly; associate this butterfly's
weight to it.

Let $S$ be a spine. Consider the commutative algebra
\[
P(S)=\Z_2\langle x\in S_2\mid \Pi_c, c\in S_0\rangle
\]
where $S_0$ is the set of vertices of the spine $S$, $S_2$ is the set of $2$-cells of the spine, and $\Pi_c$ is the Ptolemy relation~\eqref{eq:ptolemy} at the vertex $c$. Let $F(S)$ denote the fraction field of the algebra $P(S)$.

Two fields $F_1$ and $F_2$ of characteristic $2$ are \emph{stable equivalent} if there exist sets $X_1, X_2$ such that $F_1\otimes\Z_2(X_1)\simeq F_2\otimes\Z_2(X_2)$ where $\Z_2(X_1)$ is the field of rational functions over the generator set $X_i$, $i=1,2$.

\begin{theorem}
Given a 3-manifold $M$, the stable equivalence class of the fraction field $F(S)$ does not depend on the choice of the spine $S$ of the manifold $M$.
%The moduli space of solutions to a (modified) system of Biedenharn--Elliot equations taken over all true vertices of a special spine is an algebraic manifold whose type is an invariant of the initial 3-manifold.
\end{theorem}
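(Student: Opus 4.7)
The plan is to reduce the global statement to a single local check via Matveev's theorem: any two special spines of a compact $3$-manifold $M$ with enough true vertices are related by a finite sequence of Matveev--Piergallini (MP) moves (Fig.~\ref{MP}). It therefore suffices to prove that whenever $S'$ is obtained from $S$ by one MP move, the fraction fields $F(S)$ and $F(S')$ are stably equivalent; iteration then gives the theorem, after (if necessary) first adding bubbles to bring both spines into a common MP-equivalence class.

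For a single MP move I would first lay out the local data. The move replaces a region of $S$ containing two true vertices by a region of $S'$ containing three true vertices; combinatorially one gains exactly one new $2$-cell, and the collection of active $2$-cells is governed by the underlying five-vertex picture dual to a $2$-$3$ Pachner move on tetrahedra. Let $x_1,\ldots,x_k$ label the variables on the $2$-cells common to both sides and let $x_{\mathrm{new}}$ label the new variable. Then $P(S)$ is cut out by two Ptolemy relations $\Pi_1,\Pi_2\in\Z_2[x_1,\ldots,x_k]$ and $P(S')$ is cut out by three Ptolemy relations $\Pi'_1,\Pi'_2,\Pi'_3\in\Z_2[x_1,\ldots,x_k,x_{\mathrm{new}}]$.

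The heart of the argument is the pentagon principle ``Ptolemy yields Biedenharn--Elliot'' established above: the five Ptolemy relations attached to the five triangulations of a pentagon (equivalently, to the $\binom{5}{4}$ inscribed quadrilaterals on five points of a common circle) are not independent, and over $\Z_2$ any four of them determine the fifth. I would apply this to the five-vertex configuration of the MP move: after inverting a suitable Ptolemy denominator, one of $\Pi'_1,\Pi'_2,\Pi'_3$ can be solved for $x_{\mathrm{new}}$, giving $x_{\mathrm{new}}=\phi(x_1,\ldots,x_k)$ for an explicit rational function $\phi$. Substituting into the remaining two $\Pi'$s, the pentagon identity forces them to generate the same ideal as $(\Pi_1,\Pi_2)$ in the appropriate localisation, and hence
\[
F(S')\;\simeq\;F(S).
\]
In degenerate cases where the elimination is not available directly, one instead adjoins $x_{\mathrm{new}}$ formally to $F(S)$ and performs the matching there; this is exactly the stable equivalence $F(S)\otimes_{\Z_2}\Z_2(x_{\mathrm{new}})\simeq F(S')$ of the definition preceding the theorem.

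The main obstacle I anticipate is combinatorial bookkeeping. One must identify precisely which $2$-cells of $S$ and $S'$ are matched across the move, track the orientations of $2$-cells (the identification $x_{ij}=-x_{ji}$ degenerates in characteristic~$2$, but the choice of orientation still selects which monomials appear in each $\Pi$), and verify that the two relations on one side and the three on the other are genuinely the two ``halves'' of a single pentagon configuration. An additional subtlety is that the MP move is phrased spine-theoretically while the Ptolemy relations are phrased in the dual triangulation; reconciling these descriptions and handling the vanishing-denominator cases is where the real work lies, once the pentagon identity has been invoked.
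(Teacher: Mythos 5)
Your proposal follows essentially the same route as the paper: reduce via Matveev's theorem to a single Matveev--Piergallini move (plus the lune and bubble moves for stabilisation), then eliminate the one new $2$-cell variable from the three local Ptolemy relations of $S'$ and check that the resulting relations among the surviving variables coincide with the two Ptolemy relations of $S$, the coincidence being exactly the ``Ptolemy yields pentagon/Biedenharn--Elliot'' mechanism. The only difference is that you assert this coincidence as a consequence of the pentagon identity, whereas the paper's proof consists precisely of the explicit elimination (solving for $z$, then $y$, then $x$, and verifying the two expressions for $x$ and $y$ agree), so to be complete your argument would need to carry out that computation --- and to spell out the lune-move case, which you only mention in passing.
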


\begin{proof}
Let $S$ and $S'$ be spines connected by a Matveev--Piergallini move (Fig.\ref{MP}).
%The moduli space of solutions on $S$ is characterized by the algebra $P(S)$ of functions on the space. The presentation of the algebra is
%\[
%P(S)=\Z_2\langle x\in S_2\mid \Pi_c, c\in S_0\rangle
%\]
%where $S_0$ is the set of vertices of the spine $S$, $S_2$ is the set of $2$-cells of the spine, and $\Pi_c$ is the Ptolemy relation~\eqref{eq:ptolemy} at the vertex $c$.
Let us show that the algebras $P(S)$ and $P(S')$ are stable equivalent. The correspondence between $2$-cells of the spines $S$ and $S'$ is one-to-one (except the cell $z$) and induces a homomorphism $\phi\colon F(S)\to F(S')$. We need to check that $\phi$ is well defined and bijective.

The algebras $P(S)$ and $P(S')$ have the same relations except those of the vertices participating in the Matveev--Piergallini move. Let us look at the those relations. For the algebra $P(S)$ we have
\[
ad+by+cx=0,\quad ad'+by'+c'x=0,
\]
hence
\[
x=\frac{ad'+by'}{c'},\quad y=\frac{ad+cx}b=\frac{acd'+ac'd+bcy'}{bc'}
\]
in the field $F(S)$. For the algebra $P(S')$ we have
\[
az+cy'+c'y=0,\quad bz+cd'+c'd=0,\quad dy'+d'y+xz=0,
\]
hence
\begin{gather*}
z=\frac{cd'+c'd}b,\\
y=\frac{az+cy'}{c'}=\frac{acd'+ac'd+bcy'}{bc'},\\
x=\frac{dy'+d'y}z=\frac{b(bc'dy'+ac(d')^2+ac'dd'+bcd'y')}{bc'(cd'+c'd)}=\frac{ad'+by'}{c'}
\end{gather*}
in $F(S')$. Thus, the expressions for $x$ and $y$ in $F(S)$ and $F(S')$ coincide. That means the map $\phi$ establishes an isomorphism between the fraction fields.

\begin{figure}[h]
\centering\includegraphics[width=0.8\textwidth]{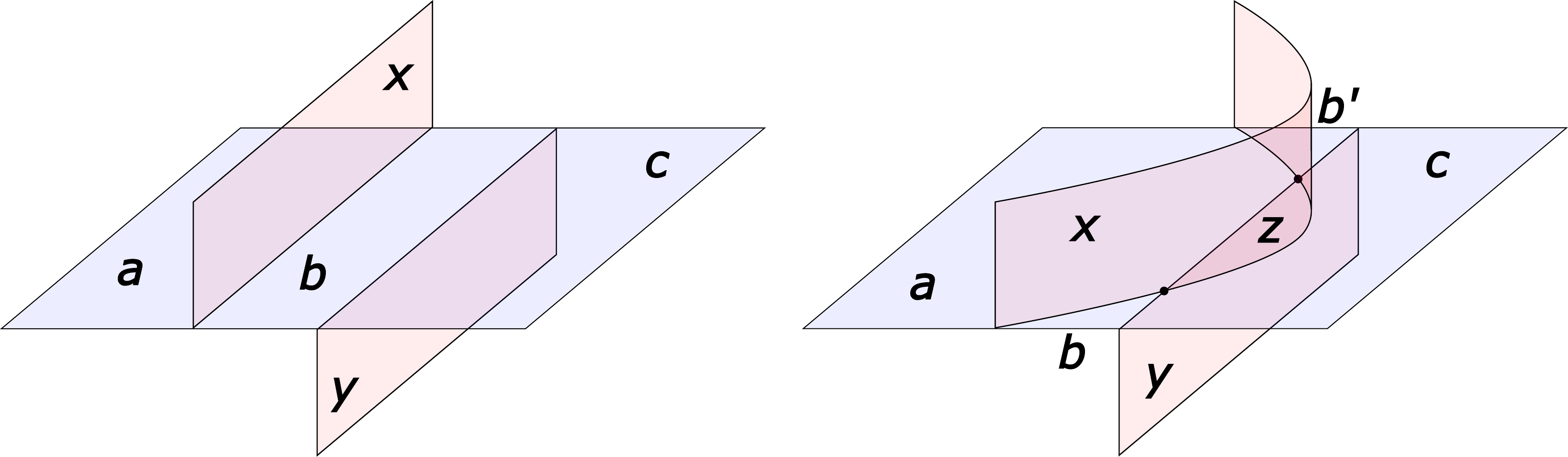}
\caption{The second Matveev-Piergallini move}
\label{MP2}
\end{figure}

For a second Matveev--Piergallini move (Fig.~\ref{MP2}) we have no relations for the spine $S$ and the relations
\[
ac+bz+xy=0,\quad ac+b'z+xy=0
\]
for the spine $S'$. Then $z=\frac{ac+xy}b$ and $b'=b$. Hence, we can identify the fields $F(S)$ and $F(S')$.

\begin{figure}[h]
\centering\includegraphics[width=0.7\textwidth]{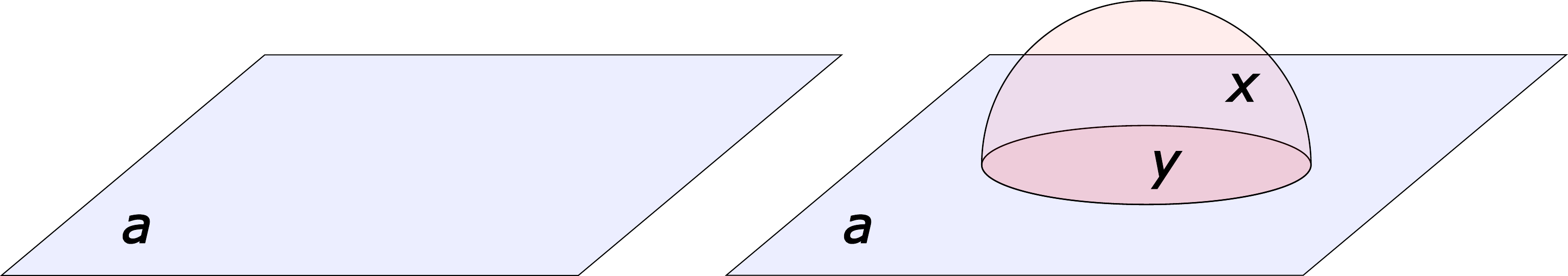}
\caption{The bubble move}
\label{bubble}
\end{figure}

For a bubble move (Fig.~\ref{bubble}) we get two additional variables and no new relations. Hence, $F(S')=F(S)\otimes \Z_2(x,y)$.

Thus, the fields $F(S)$ and $F(S')$ are stable equivalent.
\end{proof}

%\section{Volumes of tetrahedra and other solutions to Biedenharn--Elliot equations}
%
%Idea:
%instead of considering only weights $1,0$ corresponding to inscribed/non-inscribed polyhedra,
%let us do the following.
%
%With six weights $(a,b,x,c,d,y)$ we associate number $0$ if some of triangle inequalities fails
%for the sides $(a,b,x),(c,d,x),(a,d,y),(b,c,y)$.
%
%Otherwise there is a volume.
%
%$exp(vol)$.
%
%Matveev--Piergallini:
%
%$P+Q = F+G+H$ yields $exp(P)exp(Q)=exp(F)exp(G)exp(H)$.

\section{Generalisations of the above construction of 3-manifold invariants}\label{sect:generalisations}

In the above construction we dealt with a system of equations with 2-cell
weights acting as variables.

For each hexuple of labels we wrote an equation of the type

\begin{equation}\label{eq:ptolemy}
ac+bd = xy.
\end{equation}

Call it {\em the main equation}.

Let us fix some curvature $\kappa$ (positive, negative or zero) and let us
fix some radius $r$ (positive or equal to infinity).

By {\em $(\kappa,r)$-main equation} we mean the equation on the lengths
of $6$ edges of the tetrahedron inscribed in the circle of radius $r$ in the
3-space of constant curvature $\kappa$.
For the case $r=0$ we take the equation for an inscribed quadrilateral with
$2$ diagonals.

Hence, the $(0,0)$-main equation is exactly the above main equation
$ac+bd= xy$.

We argue that if we fix $\kappa,r$ and replace the main equation with the
$(\kappa,r)$-main equation, the space of solutions to the equations will
still be an invariant of $3$-manifolds.

Indeed, the only thing we used with the quadrilateral is:
whenever we draw two (inscribed) quadrilaterals sharing the same triangle
(refer to the figure) then we can find exactly one edge length (of the fifth
edge of the pentagon) which gives rise to the three remaining
quadrilaterals.

The only thing one has to change in the above argument is just to
replace ``inscribed in a circle on the plane'' with ``inscribed in a sphere
of radius $r$ in the space of constant curvature $\kappa$.''

\section{Why are the groups $\Gamma$ ubiquitous?}\label{sect:ubiquitous_gamma}

Many structures in mathematics are of extreme importance not because they just
solve a problem or provide nice invariants (which may also be the case) but because they
uncover close ties between various branches of mathematics. This allows
one to construct a ``universal translator'' between different languages different
mathematicians speak.

Among such famous objects, we have already mentioned cluster algebras
(see Fig.~\ref{fig:cl_alg}), we just mention Coxeter groups and Associahedra (Stasheff polytopes).
Among other branches of mathematics, the groups $\Gamma$ are related at
least to these three ones.

Among further directions which may appear in further papers, we mention just two ones:

In topology: passing from braids to knots, passing from 3-manifolds to 4-manifolds etc.

In group theory: deeper understanding the structure of the group $\Gamma$ themselves.

Among all possible presentations and actions of the groups $\Gamma_{n}^{k}$ we mention
just one method that the first named author calls the {\em photography method}:

{\em We take several (say, two) photos of some picture, and if we see that some of them are sufficient
to recover the whole picture, then we carefully write down the way one picture can
be transformed to the other.}

The Ptolemy transformation and the generalisation given in the last section are
evidences of that.

Namely, in order to show that {\em Ptolemy yields pentagon}, we need not calculate
anything (see~\cite{KM,FKMN,FMN,FKMN1}), we need not calculate anything:
we just say that if two quadrilaterals sharing three vertices are both inscribed then
the whole pentagon is inscribed, and the {\bf data} concerning any of its quadrilaterals
can be restored from the data of the two quadrilaterals.

What do we mean by {\bf data}? In geometry, those can be edge lengths,
angles, dihedral angles, areas, volumes, etc.

But probably other {\bf data} can come from interesting number theory
(say, quadruples, quintuples etc) and hence yield new invariants
of (at least) braids and 3-manifolds.

We expect that many representations of the groups $\Gamma$ and other groups (known or
still to be discovered) can be explained by using this method. Probably, many statements and relations the cluster algebra theory
also follow from this method.

\end{document}